\newtheorem{theorem}{Theorem}
\newtheorem{lemma}[theorem]{Lemma}
\newtheorem{remark}{Remark}
\newtheorem{corollary}[theorem]{Corollary}
\newtheorem{assumption}{Assumption}
\newtheorem{definition}{Definition}
\newcommand\copyrighttext{%
	\footnotesize \textcopyright 2025 IEEE. Personal use of this material is permitted.
	Permission from IEEE must be obtained for all other uses, in any current or future
	media, including reprinting/republishing this material for advertising or promotional
	purposes, creating new collective works, for resale or redistribution to servers or
	lists, or reuse of any copyrighted component of this work in other works.
	DOI: 10.1109/TAC.2025.3580631}
\newcommand\copyrightnotice{%
	\begin{tikzpicture}[remember picture,overlay]
		\node[anchor=south,yshift=10pt] at (current page.south) 
		{\fbox{\parbox{\dimexpr\textwidth-\fboxsep-\fboxrule\relax}{\copyrighttext}}};
	\end{tikzpicture}%
}
\begin{document}
%
\title{Mean uniformly stable function and its application to almost sure stability analysis of randomly switched time-varying systems}
%
%
%

\author{Qian~Liu,
        Yong~He,~\IEEEmembership{Senior Member,~IEEE,}
        and~Lin~Jiang,~\IEEEmembership{Member,~IEEE} \vspace{-1cm}
\thanks{This work was supported by the National Natural Science Foundation of China under Grant 62433018 and the 111 Project under Grant B17040. (\textit{Corresponding author: Yong He})}
\thanks{Qian Liu and Yong He are with School of Automation, China University of Geosciences, Wuhan430074, P. R. China, (e-mail: heyong08@cug.edu.cn).}
\thanks{Lin Jiang is with Department of Electrical Engineering and Electronic,
	The University of Liverpool, Liverpool L69 3GJ, United Kingdom, (e-mail:
	ljiang@liv.ac.uk).}
}

\maketitle
\copyrightnotice
\begin{abstract}
This paper investigates uniform almost sure stability of randomly switched time-varying systems. Mode-dependent indefinite multiple Lyapunov functions (iMLFs) are introduced to assess stability properties of diverse time-varying subsystems. To realize the stability conditions establishment based on iMLFs, we present a novel condition so-called mean uniformly stable function for time-varying parameters of iMLFs' derivatives. Our approach provides a probabilistic perspective, making iMLFs well-suited for randomly switched time-varying systems. Moreover, the MUSF condition reveals an essential insight: ensuring that each time-varying subsystem remains mean-bounded during its corresponding sojourn time interval is a prerequisite for the almost sure stability of the entire system. Additionally, the combination of iMLFs and MUSFs is able to accommodate stability analysis scenarios where some subsystems are unstable or exhibit non-exponential decay. Numerical examples are provided to demonstrate the effectiveness and advantages of our approach
\end{abstract}

\vspace{-0.2cm}
\begin{IEEEkeywords}
Almost sure uniform asymptotic stability, almost sure exponential stability, time-varying systems, switched systems, random process.
\end{IEEEkeywords}

%
\IEEEpeerreviewmaketitle

\vspace{-0.4cm}
\section{Introduction}
As an important class of dynamics, switched systems have received extensively investigations over the last few decades. Among these systems, randomly switched systems have garnered increasing attention due to their significance in both theoretical developments of practical systems with random structural changes, such as flight systems, power systems, communication systems, and networked control systems. A randomly switched system consists of a collection of subsystems and a random switching signal that dictates the transitions between subsystems, where semi-Markov chains (including Markov chains) and renewal processes are commonly employed as switching signals (see the detail in \cite{Mao2006,Chatterjee2007C,Chatterjee2007J,Chatterjee2011,Shi2015}).

Stability plays a crucial and fundamental role in the dynamic analysis and control design of systems. The main concepts of stability for randomly switched systems include: stability in probability, $p$th moment stability and almost sure stability \cite{Mao2006,Chatterjee2011,Shi2015}. Stability in probability characterizes the convergence behaviors of states through a probability measure, requiring that the events of stable state trajectories occur with a nonzero probability. In contrast, the $p$th moment stability emphasizes the average behavior of trajectories, representing stable states through mathematical expectation. This stability concept simplifies stochastic analysis problems into deterministic ones, making it widely applicable in the analysis and synthesis of stochastic systems \cite{Huang2009,Impicciatore2020sufficient,Chen2021mean}. Notably, $p$th moment stability can imply the stability in probability, some authors even express stability in probability via integral of the mean square ($2$th moment) of the states instead of probability measure (see Definition 2.1 of \cite{Feng1992stochastic} and Definition 1 of \cite{Chen2021sampled}). Beyond these two concepts, almost sure stability offers a more practical perspective: it requires that the events of the stable state trajectory occur with probability 1. As pointed out by \cite{Feng1992stochastic}, almost sure stability is of significant practical importance, as it is the sample paths rather than the $p$th moments of the state process that are observed in practice.

Given its practical utility in various stochastic systems, almost sure stability serves as the primary focus of investigation in this work. For the linear Markov switched time-invariant systems, \cite{Feng1992stochastic} has confirmed that $p$th moment stability implies almost sure stability but the reverse is not true. For the nonlinear randomly switched systems, there is no obvious inclusion relationship between these two stability concepts \cite{Chatterjee2011,Guo2018stability}. In this work, we aim to investigate uniformly almost surely stable property of nonlinear randomly switched systems, considering all the subsystems are time-varying. In doing so, we will explore what stochastic properties are satisfied by the subsystems to ensure the almost sure stability of the whole system.

To get stability estimate of each time-varying subsystem, we utilize a mode-dependent indefinite Lyapunov functions (iMLFs) scheme: for $i$th subsystem, there holds \vspace{-0.2cm}
	\begin{equation}\label{iMLF1}
		\dot{V}_i(t,x(t))\leq\lambda_i(t)V_i(t,x(t)),
	\end{equation}
where $\dot{V}_i(t,x)=\frac{\partial V_i(t,x)}{\partial t}+\frac{\partial V_i(t,x)}{\partial x}f_i(t,x)$.
The prototype of this form can be traced back to the research of time-varying systems in Krasovskii's monograph \cite{Krasovskii1963} in 1959. In Theorem 10.3 of \cite{Krasovskii1963}, Krasovskii explored the idea of replacing the negative defined constant parameter of the time derivative of Lyapunov function with a time-varying function, i.e., 
	\begin{equation}\label{iLF}
		\dot{V}(t,x(t))\leq\lambda(t)V(t,x(t)).
	\end{equation}
This approach introduces a time-varying parameter in the derivative of the Lyapunov function to capture the time-varying decay rate of energy in time-varying systems.
It is noteworthy that the time-varying parameter $\lambda(t)$ in \eqref{iLF} is allowed to be indefinitely signed during some time intervals, i.e., $\lambda(t)\in\mathbb{R}$, and thus has been named by some researchers as indefinite Lyapunov functions method (iLFs) \cite{Ning2012,Ning2014,Ning2015,Ning2018indefinite}. In recent decades, the iLFs have been applied to various types of time-varying systems, such as those with impulsive and Markov switching \cite{Wu2006}, impulsive systems \cite{Peng2010SPL,Peng2017new},  stochastic hybrid systems \cite{Peng2010IEEE} and so on.

Since randomly switched systems share same mathematical description with deterministic switched systems except switching signals, one can adopt multiple Lyapunov functions (MLFs) method \cite{Branicky1998} to conduct stability analysis. A common form of MLFs assumes that for $i$th subsystem there holds: \vspace{-0.2cm}
	\begin{equation}\label{MLF}
		\dot{V}_i(x(t))\leq c_{i}V_i(x(t)),\ c_i\in\mathbb{R}.
	\end{equation}
For randomly switched systems, its outstanding advantage is describing different energy decay rates of subsystems, even when some of them are unstable \cite{Leth2013,Schioler2014,Wang2018}. For switched time-varying systems, it's natural to combine MLFs \eqref{MLF} and iLFs \eqref{iLF} to investigate stability \cite{Lu2018stabilizability,Long2018integral,Zhang2019new,Chen2022improved,Liu2021indefinite,Liu2023improved}. However, in the literature mentioned above iMLFs method assumes derivatives of Lyapunov functions share a common time-varying parameter, that is, \vspace{-0.2cm}
	\begin{equation}\label{iMLF}
		\dot{V}_i(t,x(t))\leq\lambda(t)V_i(t,x(t)).
	\end{equation}
	Unlike MLFs \eqref{MLF}, this form can not describe different decay rates of subsystems. Hence, mode-dependent iMLFs method is presented in \cite{Chen2017relaxed,Wu2018}. It assumes that each subsystem owns a time-varying stability estimator like \eqref{iMLF1}, which is a flexible and efficient tool for stability analysis of deterministic switched time-varying systems \cite{Chen2017relaxed,Wu2018}.

	The crucial part of establish stability criteria through iMLFs \eqref{iMLF1} is the treatment of time-varying functions $\lambda_i(t)$, where the ultimate goal is to make the series formed by iMLFs along the switching time points converges to $0$ by restraining $\lambda_i(t)$. In \cite{Chen2017relaxed}, global constraints on $\lambda_{\sigma(t)}(t)$ are introduced in relation to switching signals $\sigma(t)$ with the average dwell time condition. For any switching dwell time interval $[t_k,t_{k+1}),\ k=0,1,2,\cdots$, there holds \vspace{-0.2cm}
	\begin{equation}\label{ASF_lr}
		\int_{t_0}^{\infty}\lambda_{\sigma(h)}(h)\mathrm{d}h=\lim_{k\rightarrow\infty}\sum_{l=0}^k\int_{t_l}^{t_{l+1}}\lambda_{\sigma(t_l)}(h)\mathrm{d}h=-\infty,
	\end{equation}
	which ensures $V_{\sigma}(t,x)$ to converge to $0$ and thus derives stability criteria. To ensure uniformity of attractivity and stability, the following restriction \eqref{USF_lr} is given \vspace{-0.2cm}
	\begin{equation}\label{USF_lr}
		\int_{t_k}^{t}\lambda_{\sigma(t_k)}(h)\mathrm{d}h<M.
	\end{equation}
	In addition, Wu et al. presents a linear constraint on the integral of $\lambda_{\sigma(t)}(t)$ \cite{Wu2018}, that is, \vspace{-0.2cm}
	\begin{align}\label{UESF_lr}
		\int_{t_0}^{t}\lambda_{\sigma(h)}(h)\mathrm{d}h=&\sum_{l=0}^{k-1}\int_{t_l}^{t_{l+1}}\lambda_{\sigma(t_l)}(h)\mathrm{d}h+\int_{t_k}^{t}\lambda_{\sigma(t_k)}(h)\mathrm{d}h\notag\\
		\leq&-a(t-t_0)+b.
	\end{align}
	As can be seen in \eqref{ASF_lr}-\eqref{UESF_lr}, switching moments $t_k$ should be pre-determined according to switching signal. Unfortunately, randomly switching signals  have random switching time points. Therefore, $t_k$ cannot be foreknown such that the conditions \eqref{ASF_lr}-\eqref{UESF_lr} cannot be verified in randomly switched time-varying systems. 

	To the best of the author's knowledge, there are few results regarding the use of iMLFs \eqref{iMLF1} to establish sufficient conditions for the almost sure stability of randomly switched time-varying systems.
	The challenges in applying iMLFs arise not only from the unpredictable switching time points, but also from determining the stability conditions through $\lambda_{i}(t)$ that the subsystems must satisfy to ensure the almost sure stability of the whole system. To address these challenges, we present a constraint on $\lambda_{i}(t)$ from the perspective of a random process, referred to as the mean uniformly stable function (MUSF). 
	The key contributions of the MUSF condition are as follows:
	\begin{itemize}
		\item
		This novel constraint requires integrals of $\lambda_{i}(t)$ between every two successive switching points are mean bounded. Our evaluation actually is based on the expected length of the interval confined instead of its real length, without the need to know switching time points $t_k$ and $t_{k+1}$. 
		\item
		The integral of $\lambda_{i}(t)$ is bounded solely by the sojourn time interval, ensuring that the MUSF remains uniform over time. This uniformity, in turn, guarantees the uniformity of both stability and attractivity of system.
		\item
		The MUSFs condition provides a flexible framework for stability analysis. It indicates that each subsystem can remain mean bounded during its sojourn time, even if some subsystems are unstable. In such cases, $\lambda_{i}(t)$ can take positive values, enhancing the flexibility of iMLFs for stability estimation.
		\item 
		The MUSFs condition reveals that a sufficient condition for almost sure stability is that each subsystem is mean uniformly bounded. Importantly, for unstable subsystems, the MUSFs condition still ensures that they are constrained to be mean bounded with respect to the corresponding sojourn time intervals, as previously discussed.
    \end{itemize}

The subsequent sections of this paper are organized as follows. Section 2 will formulate the system model and introduce definition of almost sure stability and useful lemmas. In Section 3, we will present an example to illustrate the idea of MUSF condition. Then, we will introduce notions of iMLFs and MUSFs. Based on these preparations, we derive sufficient conditions of uniform almost sure stability for randomly switched time-varying systems. In Section 4, numerical examples will be given to illustrate the effectiveness and advantages of our results.

\textit{Notations:} Let $(\Omega, \mathcal{F}, \mathcal{F}_{t}, \mathbb{P})$ be a complete probability space.
$\mathbb{R}^{n\times m}$ denotes the $n\times m$-dimensional Euclidean space, $\mathbb{R}^+=[0,+\infty)$, $\mathbb{R}_{t_0}^+=[t_0,+\infty)$. $\left| \cdot \right|$ denotes the Euclidean norm. $\mathbb{N}^+$ denotes nonnegative integers.
$\mathcal{K}$ denotes the class of strictly increasing continuous functions, $\mathcal{K}_{\infty}$ is the subset of $\mathcal{K}$ where functions trend to infinity as $t\rightarrow\infty$. $\mathcal{C}^{1,2}(\mathbb{R}^+_{t_0},\mathbb{R}^n;\mathbb{R}^n)$ represents the family of all $\mathbb{R}^n$-valued functions whose first variable's derivative is continuous and second variable's second derivative is continuous. Moreover, $\mathcal{PC}(\mathbb{R}^+_{t_0};\mathbb{R})$ represents the family of all piece-wise continuous function.

\vspace{-0.4cm}
\section{Model formulation and preliminaries}
Considering the following time-varying randomly switched systems: \vspace{-0.2cm}
\begin{equation}\label{eq:SDEwMS}
	\begin{cases}
		\dot{x}(t)=f_{r}(t,x(t)),\\
		x(t_0)=\phi,
	\end{cases}
\end{equation}
where $x(t)\in\mathbb{R}^n$ is state, $r(t)\in\mathbb{S}\subset\mathbb{N}^+$ is the switching signal and also a c\`adl\`ag stochastic process, where $\mathbb{S}$ is set of the switch modes. For all $r(t)=i\in\mathbb{S}$, $f_i$ satisfy the locally Lipschitz condition and $f_i(t,0,0)=0$. For simplify, we note $r(t_0)=r_0$.
The set of switching moments is defined by $\mathcal{T}=\{ t_0\leq t_1\leq t_2\leq\cdots\leq t_k\leq\cdots,\ k\in\mathbb{N}^+\}$. $N_i(t,s)$ represents the occurrence counts of $i$th subsystem during $[s,t]$, and $N_{ij}(t,s)$ is the counts of transitions from $i$th to $j$th subsystem and. The counting function $N(t,s)=\sum_{i\in\mathbb{S}}N_i(t,s)=\sum_{i,j\in\mathbb{S}}N_{ij}(t,s)$ is the total switch counts. For $k\in\mathbb{N}^+,\ i,j\in\mathbb{S}$, $S(k)=t_k-t_{k-1}$, $S_{i}(k)$ and $S_{ij}(k)$ are $k$th sojourn time, the $k$th sojourn time of mode $i$, and the sojourn time of mode $j$ at the $k$th transition from mode $i$ to mode $j$, respectively. Over the time interval $[s,t]$, $T_i(t,s)=\sum_{k=1}^{N_i(t,s)}S_i(k)$ and $T_{ij}(t,s)=\sum_{k=1}^{N_{ij}(t,s)}S_{ij}(k)$ are the total sojourn time of mode $i$ and the total sojourn time from mode $i$ to mode $j$, respectively.

In this work, we mainly consider the following randomly switching signals.
\begin{definition}[\cite{Barbu2009semi}]
	The switching signal $r(t)$ is said to be
	\begin{enumerate}
		\item [(1)] 
		semi-Markov switching signal, if for $t\in[t_{k-1},t_k]$ embedded process $r(t)=r_k$ is a discrete-time Markov chain with transition probability matrix $P=[p_{ij}]$, where $p_{ij}:=\mathbb{P}[r_{k+1}=j|r_k=i]$. The sojourn time $S(k)=t_k-t_{k-1}$ is random variable with distribution function \vspace{-0.2cm}
		\begin{equation*}
			F_{ij}(t)\!=\!\mathbb{P}\left[S(k)\!\leq\!t|r_k\!=\!j,r_{k \!-\!1}\!=\!i\right],i,j\in\mathbb{S},t\!\geq\!t_0.
		\end{equation*}
		$r_k$ and $S(k)$ are independent and memory-less i.e. \vspace{-0.2cm}
		\begin{align*}
			\mathbb{P}\left[r_k=i,S(k)\leq t|\cup_{l=1}^{k-1}\left\{r_{l},S(l)\right\}\right]\\
			=\mathbb{P}\left[r_k\!=\!i,S(k)\!\leq\! t|r_{k-1}\right],i\in\mathbb{S},t\geq t_0.
		\end{align*}
		\item [(2)]
		Markov switching signal, if for $t\in[t_{k-1},t_k],\ k\in\mathbb{N}^+$, $r(t)=r_k$ there holds \vspace{-0.2cm}
		\begin{align*}
			\mathbb{P}\left[r_k=i|\cup_{l=1}^{k-1}\left\{r_{l}\right\}\right]=\mathbb{P}\left[r_k=i|r_{k-1}\right],i\in\mathbb{S},t\geq t_0.
		\end{align*}
		And its generator $Q=[q_{ij}]$ is defined by \vspace{-0.2cm}
		\begin{equation*}
			\mathbb{P}\left[r(t+h)=j|r(t)=i\right]=
			\begin{cases}
				q_{ij}h+o(h),\ &i\neq j\\
				1+q_{ij}h+o(h),\ &i=j
			\end{cases}
		\end{equation*}
		where $h>0$, $\lim_{h\rightarrow0}\frac{o(h)}{h}=0$ and $q_{ii}=-\sum_{j\neq i}q_{ij}$.
		\item [(3 )]
		renewal process switching, if the switching counts $N(t,t_0)$ satisfies a renewal process, that is, the sojourn times $\{S(k),k\in\mathbb{N}^+\}$ are a sequence of nonnegative independent random variables with a common distribution $F$.
	\end{enumerate}	
\end{definition}\vspace{-0.3cm}

\begin{lemma}[\cite{Wu2018}]\label{Lemma_signals}
	The randomly switching signal $r(t)$ has the following properties.
	\begin{enumerate}			
		\item[(1)]
		If $r(t)$ is an irreducible semi-Markov chain, that is its embedded Markov chain is irreducible, then it has stationary distribution \vspace{-0.2cm}
		\begin{equation*} 
			\lim_{t\rightarrow\infty}\mathbb{P}[r(t)=i]=\lim_{t\rightarrow\infty}\frac{T_i(t,t_0)}{t-t_0}=\pi_i,\ \mathrm{a.s.}
		\end{equation*}
		where
		\begin{equation*} \vspace{-0.2cm}
			\pi_i=\frac{\bar{\pi}_i m_i}{\sum_{j\in\mathbb{S}}\bar{\pi}_j m_j},\ i\in\mathbb{S},
		\end{equation*}
		and $\bar{\pi}=[\bar{\pi}_i]_{1\times M}$ represents the stationary distribution of discrete embedded Markov chain. By ergodic theory, there hold \vspace{-0.2cm}
			\begin{equation*}
				\lim_{t\rightarrow\infty}\frac{N_i(t,t_0)}{t-t_0}=\frac{\pi_i}{m_i},\ \mathrm{a.s.}
			\end{equation*}
			and \vspace{-0.2cm}
			\begin{equation*}
				\lim_{t\rightarrow\infty}\frac{N_{ij}(t,t_0)}{t-t_0}=\frac{\pi_ip_{ij}}{m_{i}},\ \mathrm{a.s.}
			\end{equation*}
			where $\mathbb{E}[{S_i(k)}]=m_i,\ \forall k\in\mathbb{N}^+$.
		\item[(2)] 
		If $r(t)$ is an irreducible Markov chain has stationary \vspace{-0.2cm}
		\begin{equation*}
			\lim_{t\rightarrow\infty}\mathbb{P}[r(t)=i]=\pi_i,\ \mathrm{a.s.}
		\end{equation*}
		and by its ergodicity, there hold \vspace{-0.2cm}
			\begin{equation*}
				\lim_{t\rightarrow\infty}\frac{N_i(t,t_0)}{t-t_0}=\pi_i q_i,\ \mathrm{a.s.}
			\end{equation*}
			and \vspace{-0.2cm}
			\begin{equation*}
				\lim_{t\rightarrow\infty}\frac{N_{ij}(t,t_0)}{t-t_0}=\pi_i p_{ij} q_i=\pi_i q_{ij},\ \mathrm{a.s.}
			\end{equation*}
			where $q_i=|q_{ii}|,\ i,j\in\mathbb{S}$, $i\neq j$.
		\item[(3)]
		If $r(t)$ is a renewal process, which means $N(t,t_0)$ is a renewal process, then \vspace{-0.2cm}
		\begin{equation*}
			\lim_{t\rightarrow\infty}\frac{N(t,t_0)}{t-t_0}=\frac{1}{\theta},\ \mathrm{a.s.}
		\end{equation*}
		where $\mathbb{E}[S(k)]=\theta$.
		And there exists a probability distribution \vspace{-0.2cm}
		\begin{equation*}
			\mathbb{P}[r(t_{k+1})=i|r(t_k),\cdots,r(t_0)]=p_i.
		\end{equation*}
	\end{enumerate}
\end{lemma}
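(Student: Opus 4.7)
The plan is to handle the three parts via the standard ergodic machinery for the three classes of switching processes, treating the semi-Markov case as the master statement and recovering (2) and (3) as specializations.

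For part (1), I would begin from the embedded discrete Markov chain $\{r_k\}$, whose irreducibility gives a unique stationary distribution $\bar{\pi}$ satisfying $\bar{\pi}P=\bar{\pi}$. The key decomposition is
\begin{equation*}
T_i(t,t_0)=\sum_{k=1}^{N_i(t,t_0)}S_i(k),
\end{equation*}
where the sojourn times $S_i(k)$ are, conditional on the embedded chain visiting state $i$, i.i.d.\ with mean $m_i$. The strong law of large numbers applied along the subsequence of visits to $i$ yields $T_i(t,t_0)/N_i(t,t_0)\to m_i$ almost surely. Dividing numerator and denominator of $T_i(t,t_0)/(t-t_0)$ by the total number of switches $N(t,t_0)$ and using the ergodic theorem for the embedded chain, which gives $N_i(t,t_0)/N(t,t_0)\to\bar{\pi}_i$ almost surely, produces $T_i(t,t_0)/(t-t_0)\to \bar{\pi}_i m_i/\sum_j\bar{\pi}_j m_j=\pi_i$. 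The second limit $N_i(t,t_0)/(t-t_0)\to\pi_i/m_i$ is then immediate by combining the previous step with $T_i/N_i\to m_i$. Finally, the equality $\lim\mathbb{P}[r(t)=i]=\pi_i$ follows from the Markov renewal theorem, identifying the fraction-of-time limit with the time-averaged occupation probability.

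For part (2), I would specialize the semi-Markov reasoning: a continuous-time Markov chain is a semi-Markov process with exponential sojourns of mean $m_i=1/q_i$, and the generator-level stationary equation $\pi Q=0$ gives a stationary distribution $\pi$. Substituting $m_i=1/q_i$ into the formula from (1) yields $\pi_i q_i=\bar{\pi}_i/\sum_j\bar{\pi}_j m_j$, and since $N_i(t,t_0)/(t-t_0)=T_i(t,t_0)/((t-t_0)m_i)$ in the limit by SLLN, we recover $\pi_i q_i$ as the asymptotic switching rate into $i$. For part (3), since $N(t,t_0)$ is by hypothesis an ordinary renewal process with common mean sojourn $\theta$, the elementary renewal theorem gives $N(t,t_0)/(t-t_0)\to1/\theta$ almost surely, and the existence of the i.i.d.\ mode-selection distribution $p_i$ is the renewal assumption itself.

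The main obstacle is the semi-Markov step: establishing almost-sure (as opposed to in-expectation) convergence of $T_i(t,t_0)/(t-t_0)$ requires care, because the random index $N_i(t,t_0)$ depends on the sojourn sequence, so one must invoke a random-time SLLN or the Markov renewal strong law rather than apply the classical SLLN directly. Once that subtlety is handled, the deduction of the other limits and of parts (2)–(3) is routine, and I would cite the standard references on semi-Markov and renewal processes for the technical measure-theoretic details rather than reprove them here.
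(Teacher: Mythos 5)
The paper offers no proof of this lemma: it is quoted from the cited references (Leth et al., Schi\o ler et al., Wu et al.) and used as a black box, so there is no internal argument to compare yours against. Your sketch is, in outline, exactly the standard derivation those references rely on --- embedded-chain ergodic theorem for $N_i/N\to\bar\pi_i$, SLLN along the visit times for $T_i/N_i\to m_i$, the identity $t-t_0=\sum_j T_j(t,t_0)$ plus a sandwich $t_{N(t,t_0)}\le t<t_{N(t,t_0)+1}$ to pass from the switch-indexed limits to the $t$-indexed ones, and specialization to exponential sojourns and to ordinary renewal processes for parts (2) and (3). Two small points deserve tightening. First, the statement $\lim_{t\to\infty}\mathbb{P}[r(t)=i]=\pi_i$ is \emph{not} a consequence of the almost-sure fraction-of-time limit (which only gives the Ces\`aro average of the occupation probabilities); it is a separate assertion requiring the key/Markov renewal theorem together with a non-lattice condition on the sojourn distributions --- you name the right tool, but phrasing it as ``identifying the fraction-of-time limit with the time-averaged occupation probability'' conflates the two statements, and neither you nor the lemma records the non-lattice hypothesis. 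Second, the almost-sure limit $N(t,t_0)/(t-t_0)\to 1/\theta$ in part (3) is the strong law for renewal counting processes (SLLN plus the sandwich above), not the elementary renewal theorem, which is the expectation version; the random-index subtlety you flag in the semi-Markov case is resolved by the same sandwich argument once one notes $N_i(t,t_0)\to\infty$ a.s.\ by irreducibility and positive recurrence. With those repairs the sketch is a correct and complete route to the lemma.
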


\begin{definition}\label{Def:Stabilities}
	System \eqref{eq:SDEwMS} is said to be:
	\begin{enumerate}
		\item[(1)]
		almost surely uniformly stable (US a.s.), if for each $\epsilon>0$, there exists a constant $\delta=\delta(\epsilon)>0$, such that when $|\phi|<\delta$, \vspace{-0.2cm}
		\begin{equation*}
			\mathbb{P}\left[\sup_{t\geq t_0}|x(t)|<\epsilon\right]=1;
		\end{equation*} 
		\item[(2)]
		almost surely uniformly attractive (UA a.s.), if for any $t_0,\epsilon'>0$, there exist $\rho>0$ and $T=T(\epsilon')\geq0$ so that when $|\phi|<\delta$, \vspace{-0.2cm}
		\begin{equation*}
			\mathbb{P}\left[\sup_{t\geq T+t_0}|x(t)|<\epsilon'\right]=1;
		\end{equation*}
		\item[(3)]
		almost surely globally uniformly asymptotically stable (GUAS a.s.), if (1) and (2) are both fulfilled;
		\item[(4)]
		almost surely globally exponential stable (GES a.s.), if \vspace{-0.2cm}
		\begin{equation}\label{ineq:GES}
			\limsup_{t\rightarrow\infty}\frac{1}{t}\ln|x(t)|<0,\ \mathrm{a.s.}
		\end{equation}
		holds for all $\phi\in\mathbb{R}^n$.
	\end{enumerate}
\end{definition}

\vspace{-0.5cm}
\section{Main Results}
In this section, we will present a novel approach called mean uniformly functions (MUSFs) condition for iMLFs to establish sufficient conditions for almost sure GUAS and GES. An illustrative example will be proposed to demonstrate the reasonableness of MUSFs and its effectiveness in the stability analysis of randomly switching time-varying systems.

\vspace{-0.5cm}
\subsection{An illustrative example}\label{demo_example}

In this subsection, we will present a example to illustrate the idea of presenting mean uniformly stable functions.

	Consider a scalar linear time-varying system \vspace{-0.2cm}
	\begin{equation*}
		\begin{cases}
			\dot{x}(t)=a_{r}(t)x(t),\\
			x(t_0)=x(0)=x_0,
		\end{cases}
	\end{equation*}
	where $r(t)\in\mathbb{S}$ is an irreducible semi-Markov chain assumed to be ergodic with a unique stationary distribution. Its solution is given by \vspace{-0.4cm}
	\begin{equation*}
		x(t)=x_0\exp\left\{\int_{0}^{t}a_{r(s)}(s)\mathrm{d}s\right\}
	\end{equation*}
	To verify stability, we introduce the Lyapunov exponent \cite{Feng1992stochastic}: \vspace{-0.2cm}
	\begin{align}
		&\lim_{t\rightarrow\infty}\frac{1}{t}\ln|x(t)|\notag\\
		=&\lim_{t\rightarrow\infty}\frac{1}{t}\sum_{i\in\mathbb{S}}\int_{T_i(t,0)}a_{i}(s)\mathrm{d}s\notag\\
		=&\lim_{t\rightarrow\infty}\sum_{i\in\mathbb{S}}\frac{N_i(t,0)}{t}\frac{1}{N_i(t,0)}\sum_{k=0}^{N_i(t,0)}\int_{t_{i_{k}}}^{t_{i_{k+1}}}a_{i}(s)\mathrm{d}s, \label{Lyapunov_exp}
	\end{align}
	where $t_{i_{k}}$ is the $k$th visiting of mode $i$ and $S_i(k)=t_{i_{k+1}}\!-\!t_{i_{k}}$ is the sojourn time of mode $i$ at its $k$th visiting.

	Let's consider that all the functions $a_i(s)$ are constants, that is $a_i(s)=\bar{a}_i\in\mathbb{R}$ for all $i\in\mathbb{S}$, then $\int_{t_{i_{k}}}^{t_{i_{k+1}}}a_{i}(s)\mathrm{d}s=\bar{a}_i(t_{i_{k+1}}-t_{i_k})=\bar{a}_i S_i(k)$. By the strong law of large number \cite{Durrett2019probability} and ergodicity of $r$, we have \vspace{-0.2cm}
	\begin{align}\label{aS_egodic}
		&\lim_{t\rightarrow\infty}\sum_{i\in\mathbb{S}}\frac{N_i(t,0)}{t}\frac{1}{N_i(t,0)}\sum_{k=0}^{N_i(t,0)}a_iS_i(k)\notag\\
		=&\sum_{i\in\mathbb{S}}\frac{\pi_i}{m_i}a_i m_i
		=\sum_{i\in\mathbb{S}}\pi_i a_i,\ \mathrm{a.s.}
	\end{align}
	If $\sum_{i\in\mathbb{S}}\pi_i a_i<0$, system will be GES a.s. In the above example, irreducibility of the semi-Markov process does play a critical in deriving almost sure stability. Meanwhile, we can notice that for each mode $i$ sequence $\left\{\bar{a}_i S_i(k)\right\}_{k\in\mathbb{N}^+}$ is an independent, identically distributed (i.i.d.) process and also a wide-sense stationary process. This fact also plays a crucial role in deducing \eqref{aS_egodic}, as we will discuss next.
	 	
	For each $i$, we define a random variable for sequence $\{\bar{a}_iS_i(k)\}$ which is its time average: 
	\begin{small}
		\begin{equation*}
			A_S^i=\lim_{N\rightarrow\infty}\frac{1}{N_i}\sum_{k=0}^{N_i}a_iS_i(k),
		\end{equation*}
	\end{small}
	and its expectation is
	\begin{small}
		\begin{equation*}
			\bar{A}_S^i=\mathbb{E}[A_S^i]=\lim_{N\rightarrow\infty}\frac{1}{N_i}\sum_{k=0}^{N_i}a_i\mathbb{E}[S_i(k)]=a_i m_i.
		\end{equation*}
	\end{small} 
	We then use Chebychev's inequality to verify \eqref{aS_egodic} holds: 
	\begin{small}
		\begin{equation}\label{Pr_A_S}
			\mathbb{P}[|A_S^i-\bar{A}_S^i|<\varepsilon]\geq1-\frac{\mathrm{Var}(A_S^i)}{\varepsilon^2},
		\end{equation}
	\end{small}
	where $\varepsilon$ is a positive constant. For our random process to be ergodic, we need the probability in \eqref{Pr_A_S} to be 1 no matter the value of $\varepsilon$ is. Now, we calculate the variance:
	\begin{small}
			\begin{align}\label{var_A}
			&\mathrm{Var}(A_S^i)\notag\\
			=&\mathbb{E}\left[(A_S^i-\bar{A}_S^i)^2\right]\notag\\
			=&\mathbb{E}\left[\lim_{N_i\rightarrow\infty}(\frac{1}{N_i}\sum_{k=0}^{N_i}\bar{a}_iS_i(k)-\bar{a}_im_i)(\frac{1}{N_i}\sum_{l=0}^{N_i}\bar{a}_iS_i(l)-\bar{a}_im_i)\right]\notag\\
			=&\lim_{N_i\rightarrow\infty}\left(\frac{1}{N_i}\right)^2\sum_{k=0}^{N_i}\sum_{l=0}^{N_i}\mathbb{E}\left[(\bar{a}_iS_i(k)-\bar{a}_im_i)(\bar{a}_iS_i(l)-\bar{a}_im_i)\right]\notag\\
			=&\lim_{N_i\rightarrow\infty}\left(\frac{1}{N_i}\right)^2\sum_{k=0}^{N_i}\sum_{l=0}^{N_i}\left(\mathbb{E}\left[\bar{a}_i^2 S_i(k) S_i(l)\right]-(\bar{a}_im_i)^2\right)\notag\\
			=&\lim_{N_i\rightarrow\infty}\left(\frac{1}{N_i}\right)^2\sum_{k=0}^{N_i}\sum_{l=0}^{N_i}\left(\bar{a}_i^2\mathbb{E}\left[S_i(k)\right]\mathbb{E}\left[S_i(l)\right]-(\bar{a}_im_i)^2\right)\notag\\
			=&0,
		\end{align}
	\end{small}
	where the 5th equal sign holds by the independence between $S(k)$ and $S(l)$, and the 6th equal sign holds by $\mathbb{E}[S(k)]=\mathbb{E}[S(l)]=m_i$.

	For each time-varying parameter $a_i(s)$, it's too strict to require sequence $\{\int_{t_{i_k}}^{t_{i_{k+1}}}a_i(s)\mathrm{d}s\}$ to be a wide-sense stationary and an i.i.d. process simultaneously. Therefore, we introduce functions $b_i\in\mathcal{C}(\mathbb{R}^+,\mathbb{R})$ such that  $\int_{t_{i_k}}^{t_{i_{k+1}}}a_i(s)\mathrm{d}s\leq b_i(S_i(k))$ holds for each $i$. Then, \eqref{Lyapunov_exp} becomes\vspace{-0.2cm}
	\begin{align}
		&\lim_{t\rightarrow\infty}\frac{1}{t}\ln|x(t)|\notag\\
		=&\lim_{t\rightarrow\infty}\sum_{i\in\mathbb{S}}\frac{N_i(t,0)}{t}\frac{1}{N_i(t,0)}\sum_{k=0}^{N_i(t,0)}\int_{t_{i_{k}}}^{t_{i_{k+1}}}a_{i}(s)\mathrm{d}s,\notag\\
		\leq&\lim_{t\rightarrow\infty}\sum_{i\in\mathbb{S}}\frac{N_i(t,0)}{t}\frac{1}{N_i(t,0)}\sum_{k=0}^{N_i(t,0)}b_i(S_i(k)).
	\end{align}
	For any mode $i$, function $b_i(s)$ only depends on its corresponding sojourn time interval $S_i(k)$. Define $Y^i_k=b_i(S_i(k))$ for each $i$, and thus the sequence $\{Y^i_k\}_{k\in\mathbb{N}^+}$ is an i.i.d. process. Moreover, the continuity inherit from integral $\int_{t_{i_k}}^{t_{i_{k+1}}}a_i(s)\mathrm{d}s$ makes each $b_i(s)$ to be a measure-preserving function. Hence, for all $k$, $\mathbb{E}[Y^i_{k+1}]=\mathbb{E}[Y^i_k]=M_i$, $\{Y^i_k\}_{k\in\mathbb{N}^+}$ is a wide-sense stationary process.

Same as proving \eqref{aS_egodic}, we can define a random variable $A_y^i$ from $\{Y_k^i\}_{k\in\mathbb{N}^+}$ as $A_y^i=\lim_{N_i\rightarrow\infty}\frac{1}{N_i}\sum_{k=0}^{N_i}Y_k^i$ and $\bar{A}_y^i=\mathbb{E}[Y_k^i]=M_i$, where $N_i=N_i(t,t_0)$, $N_i\rightarrow\infty$ as $t\rightarrow\infty$ and prove that the sequence $\{Y^i_k\}_{k\in\mathbb{N}^+}$ is ergodic by the Chebychev's inequality. The Lyapunov exponent \eqref{Lyapunov_exp} follows from the ergodicity of $\{Y^i_k\}_{k\in\mathbb{N}^+}$ that \vspace{-0.2cm}
	\begin{align*}
		\lim_{t\rightarrow\infty}\frac{1}{t}\ln|x(t)|
		\leq&\lim_{t\rightarrow\infty}\sum_{i\in\mathbb{S}}\frac{N_i(t,0)}{t}\frac{1}{N_i(t,0)}\sum_{k=0}^{N_i(t,0)}b_i(S_i(k))\\
		=&\sum_{i\in\mathbb{S}}\frac{\pi_i}{m_i}M_i,\ \mathrm{a.s.}
	\end{align*}
When $\sum_{i\in\mathbb{S}}\frac{\pi_i}{m_i}M_i<0$, system is GES a.s. We derive a sufficient conditions of GES a.s. for time-varying systems.   

\begin{remark}
	In this example, the ergodicity holds due to the wide-sense stationarity and independence of the processes $\{Y_k^i\}_k$. For each subsystem, the wide-sense stationarity of $\{Y_k^i\}_k$ implies that $\mathbb{E}\left[\int_{t_{i_k}}^{t_{i_{k+1}}}a_i(s)\mathrm{d}s\right]\leq M_i$ for all $k$, indicating that each subsystem is mean bounded during its corresponding sojourn time intervals. This example illustrates that the prerequisite for an almost sure stable system is that all subsystems must be uniformly mean bounded.
\end{remark}

\vspace{-0.4cm}
\subsection{Indefinite multiple Lyapunov functions and mean uniformly functions}

The iMLFs scheme is given as follow.

\begin{assumption}\label{iMLFs}
	For all $i,j\in\mathbb{S}$,  $V_i\in\mathcal{C}^{1,2}(\mathbb{R}_{t_0}^+,\mathbb{R}^n;\mathbb{R}^+)$ are said to be iMLFs, if there exists $\lambda_i\in\mathcal{PC}(\mathbb{R}_{t_0}^+;\mathbb{R})$, $\alpha_1,\alpha_2\in\mathcal{K}_{\infty}$, $\rho\in\mathcal{K}_{\infty}$, and positive constants $\mu_{ij}>0$ such that
	\begin{enumerate}
		\item[(A.1)]
		$\alpha_1(|x(t)|)\leq V_i(t,x(t))\leq\alpha_2(|x(t)|),\ \forall i\in\mathbb{S};$
		\item[(A.2)]
			for all $t\in\mathbb{R}_{t_0}^+$ and $i\in\mathbb{S}$, \vspace{-0.3cm}
			\begin{equation}\label{V_sep_ISS}
				\dot{V}_i(t,x)\leq\lambda_i(t)V_i(t,x(t))
			\end{equation}
			where $\dot{V}_i(t,x)=\frac{\partial V_i(t,x)}{\partial t}+\frac{\partial V_i(t,x)}{\partial x}f_i(t,x)$;
		\item[(A.3)]
			for all $t\in\mathcal{T}$ and $i,j\in\mathbb{S}$, $V_i(t,x(t))\leq\mu_{ij} V_j(t,x(t))$ holds with $\mu_{ii}=1$.
	\end{enumerate}
\end{assumption}

\begin{remark}
	Assumption (A.1) implies that these Lyapunov-like functions are radially unbounded. Assumption (A.2) gives time-varying stability estimators for all subsystems with time-varying dynamic. The indefinite sign of $\lambda(t)$ in \eqref{V_sep_ISS} implies that subsystems may be stable or unstable during the whole timeline. The switching compatibility is assumed in (A.3) among the Lyapunov-like functions, which effectively establishes mode-dependent linear comparable relationships between different Lyapunov-like functions.
\end{remark}

Inspired by the discussions in the Subsection 3.1, to make iMLFs method applicable for randomly switched time-varying systems, we introduce following condition on each $\lambda_{i}(t)$, named as mean uniformly stable function.

\begin{definition}\label{Def_MUSF}
	Time-varying scalar function $\lambda(t)\in\mathcal{PC}(\mathbb{R}^+;\mathbb{R})$ is said to be a mean uniformly stable function (MUSF), if for corresponding real-valued random variable $S>0$ with $\mathbb{E}[S]<\infty$, there exists a continuous function $\varphi(S)\in\mathcal{C}(\mathbb{R}^+;\mathbb{R})$ such that $\mathbb{E}\left[\varphi(S)\right]<\infty$ and
	\begin{equation}\label{MUSF}
		\int_{t}^{t+S}\lambda(h)\mathrm{d}h	\leq \varphi(S),\ \forall t\geq t_0.
	\end{equation}
\end{definition}

\begin{remark}
		When applying the MUSF condition within the iMLF framework, we select the sojourn time $S_i$ as the corresponding random variable for $\lambda_i(t)$. This allows us to compute the integral $\int_{t}^{t+S_i} \lambda_i(h)\mathrm{d}h$ and derive the corresponding function $\varphi_i(\cdot)$. In this process, we treat $S_i$ as a coefficient in the definite integral calculation, yielding $\varphi_i(S_i)$. Then, by taking the expectation of both $\int_{t}^{t+S_i}\lambda_i(h)\mathrm{d}h$ and $\varphi_i(S_i)$, we obtain a real-valued bound that captures the cumulative variations of the time-varying function $\lambda_i(t)$ over the random sojourn time $S_i$. Due to the piecewise continuity of function $\lambda_i(t)$, its integral is continuous, and existence of a continuous function $\varphi_i(\cdot)$ bounding this integral is reasonable. Meanwhile, function $\varphi_i$ is independent on time but dependent on sojourn time intervals, which performs the uniformity in time. $\mathbb{E}[\int_{t}^{t+S_i}\lambda_i(h)\mathrm{d}h]\leq\mathbb{E}[\varphi_i(S_i)]$ implies that all subsystems are mean bounded during their respective sojourn time intervals. Thus, we name this constraint for time-varying scalar function $\lambda_i(t)$ as `mean uniformly stable function'.
\end{remark}

\vspace{-0.4cm}
\subsection{Almost sure stability of time-varying systems with semi-Markov switching}

\begin{theorem}\label{SM_GUAS}
	If Assumption \ref{iMLFs} and following condition hold:
	\item[(S.4)] $\lambda_{i}(t)$ are MUSFs for all $i\in\mathbb{S}$ and $\sum_{i\in\mathbb{S}}\frac{\pi_i}{m_i}(\mathbb{E}(\varphi_i(S_i))+\sum_{j\in\mathbb{S}}p_{ij}\ln\mu_{ij})<0$.
	
	\noindent Then semi-Markov switched system \eqref{eq:SDEwMS} is GUAS a.s.
\end{theorem}

\begin{proof}
		Without loss of generality, we assume that $t_0=0$. To simplify, $v_{r}(t)=V_{r}(t,x(t)),r(t)\in\mathbb{S}$.
		By (A.3), we get that for $t\in[t_k,t_{k+1})$, $r(t)=r_k$, \vspace{-0.2cm}
		\begin{equation*}
			v_{r_k}(t)\leq v_{r_k}(t_k)e^{\int_{t_k}^{t}\lambda_{r_k}(h)\mathrm{d}h}.
		\end{equation*}
		and for $t\in[t_{l-1},t_l]$, $r(t)=r_{l-1}$, $l=1,2,\cdots,k$, \vspace{-0.2cm}
		\begin{equation*}
			v_{r_l}(t_{l+1}^-)=v_{r_l}(t_l)e^{\int_{t_l}^{t_{l+1}}\lambda_{r_l}(h)\mathrm{d}h}.
		\end{equation*}
		Then (A.4) yields that  $v_{r_{l+1}}(t_{l+1})\leq\mu_{r_{l+1}r_{l}}v_{r_l}(t_{l+1}^-).$
		Thus, by iteration, there holds \vspace{-0.2cm}
		\begin{equation}\label{v_est}
			v_{r}(t)\leq v_{r_0}(0)\prod_{i,j\in\mathbb{S}}\mu_{ij}^{N_{ij}(t,0)}e^{\int_{0}^{t}\lambda_{r(h)}(h)\mathrm{d}h}.
	    \end{equation}
	Condition (A.1) yields that
	\begin{equation}\label{x_est}
			|x(t)|\!\leq\!\alpha_1^{\!-\!1}\!\left(\!\alpha_2(|\phi|) e^{\int_{0}^t\lambda_{r(h)}(h)\mathrm{d}h\!+\!\sum_{i,j\in\mathbb{S}}N_{ij}(t,0)\!\ln\mu_{ij}}\!\right).
	\end{equation}
	We claim that: 
		\begin{equation}\label{lambda_limt}
			\lim_{t\rightarrow\infty}\frac{1}{t}\int_{0}^{t}\lambda_{r(h)}(h)\mathrm{d}h\leq\sum_{i\in\mathbb{S}}\frac{\pi_i}{m_i}\mathbb{E}[\varphi_{i}(S_i)],\ \mathrm{a.s.}
	\end{equation}
	The proof of \eqref{lambda_limt} is given as following: \vspace{-0.2cm}
	\begin{small}
	\begin{align}\label{lambda_limt_1}
		&\lim_{t\rightarrow\infty}\frac{1}{t}\int_{0}^{t}\lambda_{r(h)}(h)\mathrm{d}h\notag\\
		=&\lim_{t\rightarrow\infty}\frac{1}{t}\sum_{i\in\mathbb{S}}\sum_{k=0}^{N_i(t,0)}\int_{t_{i_{k}}}^{t_{i_{k+1}}}\lambda_{i}(h)\mathrm{d}h\notag\\
		=&\lim_{t\rightarrow\infty}\sum_{i\in\mathbb{S}}\frac{N_i(t,0)}{t}\frac{1}{N_i(t,0)}\sum_{k=0}^{N_i(t,0)}\int_{t_{i_{k}}}^{t_{i_{k+1}}}\lambda_{i}(h)\mathrm{d}h\notag\\
		\leq&\lim_{t\rightarrow\infty}\sum_{i\in\mathbb{S}}\frac{N_i(t,0)}{t}\frac{1}{N_i(t,0)}\sum_{k=0}^{N_i(t,0)}\varphi_{i}(S_i(k)).
	\end{align}
	\end{small}
	Since $\lambda_{i}(t)$ are MUSFs, we have for $k=1,\!\cdots\!,N_i(t,0)$, $\int_{t_{i_{k}}}^{t_{i_{k+1}}}\!\lambda_{i}(h)\mathrm{d}h\!\leq\!\varphi_{i}(S_i(k))$. For each $i$, sequence $\{\varphi_{i}(S_i(k))\}$ is a wide-sense stationary process. Then, we repeat the proof procedure of the illustrative example in Section 3.1 and get \vspace{-0.2cm}
		\begin{small}
		\begin{equation*}
			\lim_{t\rightarrow\infty}\frac{1}{N_i(t,0)}\sum_{k=0}^{N_i(t,0)}\varphi_{i}(S_i(k))
			=\mathbb{E}\left[\varphi_{i}(S_i(0))\right],\ \mathrm{a.s.}
		\end{equation*}
		\end{small}
		Recalling the Lemma \ref{Lemma_signals} and applying strong law of large numbers \cite{Durrett2019probability}, following holds almost surely \vspace{-0.2cm}
		\begin{equation*}
			\eqref{lambda_limt_1}
			\!=\!\lim_{t\rightarrow\infty}\sum_{i\in\mathbb{S}}\frac{N_i(t,0)}{t}\mathbb{E}\left[\varphi_{i}(S_i(0))\right]\!=\!\sum_{i\in\mathbb{S}}\frac{\pi_i}{m_i}\mathbb{E}\left[\varphi_{i}(S_i(0))\right],
		\end{equation*}
		which yields \eqref{lambda_limt}.
	
		It follows from \eqref{lambda_limt}, (S.4) and Lemma \ref{Lemma_signals} that \vspace{-0.2cm}
		\begin{small}
		\begin{align}\label{limt_lambda_Nt}
			&\lim_{t\rightarrow\infty}\frac{1}{t}\left(\int_{0}^{t}\lambda_{r(h)}(h)\mathrm{d}h+\sum_{i,j\in\mathbb{S}}N_{ij}(t,0)\ln\mu_{ij}\right)\notag\\
			\leq&\sum_{i\in\mathbb{S}}\frac{\pi_i}{m_i}\left(\mathbb{E}\left[\varphi_{i}(S_i(0))\right]+\sum_{j\in\mathbb{S}}p_{ij}\ln\mu_{ij}\right)<0,\ \mathrm{a.s.}
		\end{align}
		\end{small}	
		which implies that \vspace{-0.2cm}
		\begin{small}
		\begin{equation}\label{lim_lambda<0}
			\lim_{t\rightarrow\infty}\!\left(\!\int_{0}^{t}\!\lambda_{r(h)}(h)\mathrm{d}h\!+\!\sum_{i,j\in\mathbb{S}}\!N_{ij}(t,0)\!\ln\!\mu_{ij}\!\right)\!=\!-\!\infty,\ \mathrm{a.s.}
		\end{equation}
		\end{small}
		and it can further deduce that \vspace{-0.2cm}
		\begin{equation*}
			\lim_{t\rightarrow\infty}e^{\int_{0}^{t}\lambda_{r(h)}(h)\mathrm{d}h+\sum_{i,j\in\mathbb{S}}N_{ij}(t,0)\ln\mu_{ij}}=0,\ \mathrm{a.s.}
		\end{equation*}
		Hence, we can get that there exists a constant $M>0$ satisfying \vspace{-0.2cm}
		\begin{small}
		\begin{equation*}
			\sup_{t\geq 0}\int_{0}^{t}\lambda_{r(h)}(h)\mathrm{d}h+\sum_{i,j\in\mathbb{S}}N_{ij}(t,0)\ln\mu_{ij}\leq\ln M,\ \mathrm{a.s.}
		\end{equation*}
		\end{small}	
	Thus, for any $\varepsilon>0$, set $0<\delta<\alpha_2^{-1}(\alpha_1(\varepsilon)/M)$, when $|\phi|<\delta$ and $\|u\|<\delta$, we get from \eqref{x_est} that \vspace{-0.2cm}
	\begin{equation*}
		\sup_{t\geq 0}|x(t)|<\varepsilon,\ \mathrm{a.s.},
	\end{equation*}
	which satisfies US condition in Definition \ref{Def:Stabilities}.
	Moreover, we can know that: for any given constants $\delta'>0$ and $\epsilon'>0$, there exists $T=T(\varepsilon')$ such that \vspace{-0.2cm}
	\begin{equation*}
			\mathbb{P}\left[\sup_{t\geq T}\prod_{i\in\mathbb{S}}\mu_{ij}^{N_{ij}(t,0)}e^{\int_{0}^{t}\lambda_{r(h)}(h)\mathrm{d}h}<\frac{\alpha_1(\varepsilon')}{\alpha_2(\delta')}\right]=1.
	\end{equation*}
	Then, together with \eqref{x_est}, for any $|\phi|<\delta'$, there holds \vspace{-0.2cm}
	\begin{equation*}
		\mathbb{P}\left[\sup_{t\geq T}|x(t)|<\epsilon'\right]=1,
	\end{equation*}
	which satisfies UA condition in Definition \ref{Def:Stabilities}. Time-varying system \eqref{eq:SDEwMS} with semi-Markov switching is GUAS a.s.
\end{proof}
\vspace{-0.4cm}

\begin{remark}
		In order to explore the tolerance of Theorem \ref{SM_GUAS} for unstable subsystems, we might assume that: there exist sets $\mathbb{S}_1$ and $\mathbb{S}_2$ such that $\mathbb{S}=\mathbb{S}_1\cup\mathbb{S}_2$ and $\mathbb{S}_1\cap\mathbb{S}_2=\emptyset$, for the modes belonging to $\mathbb{S}_1$ relative subsystems are stable, and for the modes belonging to $\mathbb{S}_2$ subsystems are unstable and $\lambda_{j}(t)>0,\ \forall j\in\mathbb{S}_2$.
		Although for all $j\in\mathbb{S}_1$ $\lambda_{j}(t)$ are allowed to be positive to describe the existence of unstable subsystems, the MUSF condition yields integrals of $\lambda_{j}(t)$ are bounded $\mathbb{E}\left[\int_{t_{j_k}}^{t_{j_{k+1}}}\lambda_{j}(s)\mathrm{d}s\right]\leq\mathbb{E}[\varphi_{j}(S_j)]<\infty$ which implies $j$th subsystem is stable in mean. On the other hand, condition (S.4) implies that, for all $i\in\mathbb{S}_1$, $\mathbb{E}\left[\int_{t_{i_k}}^{t_{i_{k+1}}}\lambda_{i}(s)\mathrm{d}s\right]\leq\mathbb{E}[\varphi_{i}(S_i)]<0$ and $\mathbb{E}[\varphi_{i}(S_i)]$ need to be negative enough to ensure the whole system stability.
\end{remark}

\vspace{-0.4cm}
\begin{theorem}\label{SM_GES}
	If (A.2), (A.3), (S.4) and following hold
	\begin{enumerate}
		\item [(B.1)] there exists positive constants $c,p>0$ such that
		\begin{equation*}
			c|x(t)|^p\leq V_i(t,x(t)),\ \forall i\in\mathbb{S}.
		\end{equation*}
	\end{enumerate}
	Then time-varying system \eqref{eq:SDEwMS} with semi-Markov switching signal is GES a.s.
\end{theorem}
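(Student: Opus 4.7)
The plan is to piggy-back on the estimates already derived in the proof of Theorem \ref{SM_GUAS}, since the hypotheses (A.2), (A.3), (S.4) are unchanged and only the upper bound in (A.1) has been removed in favor of the polynomial lower bound (B.1). First I would observe that the iterated inequality \eqref{v_est},
\begin{equation*}
v_r(t)\leq v_{r_0}(0)\prod_{i\in\mathbb{S}}\mu_i^{N_i(t,0)}\exp\!\Bigl(\int_{0}^{t}\lambda_{r(h)}(h)\,\mathrm{d}h\Bigr),
\end{equation*}
was derived using only (A.2) and (A.3), so it remains valid here. Combining this with (B.1) gives
\begin{equation*}
|x(t)|^{p}\leq\frac{v_{r_0}(0)}{c}\prod_{i\in\mathbb{S}}\mu_i^{N_i(t,0)}\exp\!\Bigl(\int_{0}^{t}\lambda_{r(h)}(h)\,\mathrm{d}h\Bigr).
\end{equation*}

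Next I would take logarithms, divide by $t$, and pass to the $\limsup$. The constant $\tfrac{1}{t}\ln(v_{r_0}(0)/c)$ vanishes, and the remaining two terms are exactly the quantity already controlled in the proof of Theorem \ref{SM_GUAS}: the a.s.\ limit \eqref{lambda_limt} together with Lemma \ref{Lemma_signals}(1) yields
\begin{equation*}
\lim_{t\to\infty}\frac{1}{t}\Bigl(\int_{0}^{t}\lambda_{r(h)}(h)\,\mathrm{d}h+\sum_{i\in\mathbb{S}}N_i(t,0)\ln\mu_i\Bigr)=\sum_{i\in\mathbb{S}}\frac{\pi_i}{m_i}\bigl(\mathbb{E}[\varphi_i(S_i(1))]+\ln\mu_i\bigr)\ \mathrm{a.s.},
\end{equation*}
which is strictly negative by (S.4). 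Dividing by $p>0$ therefore gives
\begin{equation*}
\limsup_{t\to\infty}\frac{1}{t}\ln|x(t)|\leq\frac{1}{p}\sum_{i\in\mathbb{S}}\frac{\pi_i}{m_i}\bigl(\mathbb{E}[\varphi_i(S_i(1))]+\ln\mu_i\bigr)<0\ \mathrm{a.s.},
\end{equation*}
which is precisely condition \eqref{ineq:GES} in Definition \ref{Def:Stabilities}(4), establishing GES a.s.

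There is really no new obstacle: the theorem is essentially a corollary of the a.s.\ rate computation already carried out for GUAS. The only point worth flagging in the write-up is that the upper bound in (A.1) played no role in deriving \eqref{v_est} or \eqref{lambda_limt}; it was only used in the final $\alpha_1,\alpha_2$ comparison that yielded uniform stability and attractivity. Consequently, replacing (A.1) by the weaker one-sided polynomial bound (B.1) still suffices to extract an exponential decay rate for $|x(t)|$ itself, which is exactly what GES a.s.\ requires.
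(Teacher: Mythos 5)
Your proof is correct and follows essentially the same route as the paper: reuse the iterated bound \eqref{v_est} (which needs only (A.2)--(A.3)), replace the $\alpha_1,\alpha_2$ comparison by the one-sided bound (B.1), take logarithms, divide by $t$, and invoke the a.s.\ limit \eqref{lambda_limt} together with (S.4) to conclude \eqref{ineq:GES}. Your remark that the upper bound in (A.1) is not needed here is accurate and matches the paper's use of hypotheses.
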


\begin{proof}
	Recalling \eqref{v_est}, condition (B.2) and (A.3) yield that
	\begin{equation*}
		V_r(t,x(t))=V_{r_0}(t_0,\phi)\prod_{i\in\mathbb{S}}\mu_{ij}^{N_{ij}(t,t_0)}e^{\int_{t_0}^{t}\lambda_{r(h)}(h)\mathrm{d}h}
	\end{equation*}
	Taking logarithm on both sides, we have
	\begin{align}\label{logv_est}
		\ln V_r(t,x)\leq&\ln V_{r_0}(t_0,\phi)+\sum_{i,j\in\mathbb{S}}N_{ij}(t,t_0)\ln\mu_{ij}\notag\\
		&+\int_{t_0}^{t}\lambda_{r(h)}(h)\mathrm{d}h
	\end{align}
	It can be checked by (B.1) and \eqref{lambda_limt} that
		\begin{small}
		\begin{align*}
			&\limsup_{t\rightarrow\infty}\frac{1}{t}\ln|x(t)|\\
			\leq&\frac{1}{p}\limsup_{t\rightarrow\infty}\frac{1}{t}\left(\ln V_{r_0}(t_0,\phi)\right.\\
			&\left.+\sum_{i,j\in\mathbb{S}}N_{ij}(t,t_0)\ln\mu_{ij}+\int_{t_0}^{t}\lambda_{r(h)}(h)\mathrm{d}h+\ln c\right)\\
			=&\frac{1}{p}\sum_{i\in\mathbb{S}}\frac{\pi_i}{m_i}\left(\mathbb{E}\left[\varphi_{i}(S_i(0))\right]+\sum_{j\in\mathbb{S}}p_{ij}\ln\mu_{ij}\right),\ \mathrm{a.s.}
		\end{align*}
		\end{small}	
	Then, together with (S.4), we get \eqref{ineq:GES}. Time-varying system with semi-Markov switching is GES a.s.
\end{proof}

\subsection{Almost sure stability of time-varying systems with Markov switching}

A Markov chain is a special form of a semi-Markov chain, and it can be directly derived  from a semi-Markov chain by assuming that sojourn time satisfies exponential distributions (see Lemma 2 in \cite{Wu2018}). Therefore, we can promptly obtain stability criteria for the system \eqref{eq:SDEwMS} with a Markov switching signal by letting $m_i={1}/{q_i}$.

\begin{corollary}\label{M_GUAS}
	If Assumption \ref{iMLFs} and following condition hold:
	\item[(M.4)]for all $i\in\mathbb{S}$, $\lambda_{i}(t)$ are MUSFs 
	and $\sum_{i\in\mathbb{S}}(\mathbb{E}[\varphi_i(S_i)]q_i+\sum_{j\in\mathbb{S}}q_{ij}\ln\mu_{ij})\pi_i<0$.

	\noindent Then system \eqref{eq:SDEwMS} with Markov switching is GUAS a.s.
\end{corollary}

\begin{corollary}\label{MS_GES}
	If (B.1), (A.2), (A.3) hold and (M.4) is satisfied.	
	Then system \eqref{eq:SDEwMS} with Markov switching is GES a.s.
\end{corollary}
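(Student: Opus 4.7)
The plan is to adapt the proof of Theorem \ref{SM_GES} almost verbatim, since a Markov chain is a semi-Markov chain whose dwell times are exponentially distributed with mean $m_i=1/q_i$. The only substantive changes are the ergodic constants supplied by Lemma \ref{Lemma_signals}(2), namely $\lim_{t\to\infty} N_i(t,t_0)/(t-t_0)=\pi_i q_i$ almost surely, in place of $\pi_i/m_i$, and the corresponding adjustment in the final sign condition.

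First I would reuse the deterministic bound \eqref{v_est} produced by the iteration from (A.2)--(A.3), giving
\[
V_r(t,x(t))\leq V_{r_0}(t_0,\phi)\prod_{i\in\mathbb{S}}\mu_i^{N_i(t,t_0)}e^{\int_{t_0}^{t}\lambda_{r(h)}(h)\mathrm{d}h},
\]
then apply (B.1) and take $\ln$ to obtain
\[
\tfrac{1}{t}\ln|x(t)|\leq \tfrac{1}{pt}\Big(\ln V_{r_0}(t_0,\phi)-\ln c+\sum_{i\in\mathbb{S}}N_i(t,t_0)\ln\mu_i+\int_{t_0}^{t}\lambda_{r(h)}(h)\mathrm{d}h\Big).
\]
Taking $\limsup_{t\to\infty}$, the initial terms vanish, and it remains to identify the almost sure limits of the two nontrivial pieces.

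Next I would reproduce the decomposition leading to \eqref{lambda_limt}, splitting $\int_{t_0}^{t}\lambda_{r(h)}(h)\mathrm{d}h$ into a sum over modes and over dwell intervals, inserting the factor $N_i(t,t_0)/(t-t_0)$, and applying the strong law of large numbers to the i.i.d.\ (per mode) increments $\int_{t_{k-1}}^{t_k}\lambda_i(h)\mathrm{d}h$. For Markov switching the second factor converges a.s.\ to $\pi_i q_i$ by Lemma \ref{Lemma_signals}(2), while the per-mode empirical mean converges to $\mathbb{E}[\varphi_i(S_i)]$, $\bar M_i$, or $\bar\lambda_i m_i=\bar\lambda_i/q_i$ depending on whether we invoke (M.4), (M.4$'$), or (M.4$''$). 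Combining with $\sum_i N_i(t,t_0)\ln\mu_i/t\to\sum_i \pi_i q_i\ln\mu_i$ a.s., each of the three hypotheses gives
\[
\limsup_{t\to\infty}\tfrac{1}{t}\ln|x(t)|\leq \tfrac{1}{p}\sum_{i\in\mathbb{S}}\pi_i q_i\big(\Psi_i+\ln\mu_i\big)<0,\ \mathrm{a.s.}
\]
where $\Psi_i$ stands for $\mathbb{E}[\varphi_i(S_i)]$, $\bar M_i$, or $\bar\lambda_i/q_i$ respectively, thus establishing \eqref{ineq:GES}.

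There is no real obstacle here since the argument is a structural copy of Theorem \ref{SM_GES}; the only bookkeeping subtlety is the reconciliation of (M.4$''$), written with $\bar\lambda_i/q_i$, with the stated sign condition $\sum_i(\bar\lambda_i+\ln\mu_i\,q_i)\pi_i<0$, which is immediate upon multiplying through by $q_i$ inside the sum. I would therefore simply note that the semi-Markov proof applies with $m_i$ replaced by $1/q_i$ throughout, verify the three sign inequalities line up as written in Corollary \ref{M_GUAS}, and conclude.
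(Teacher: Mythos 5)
Your proposal is correct and follows exactly the route the paper intends: the paper's own justification for this corollary is the one-line remark that one substitutes $m_i=1/q_i$ (exponential dwell times) into the proofs of Theorems \ref{SM_GUAS} and \ref{SM_GES}, which is precisely the adaptation you carry out, including the correct reconciliation of the (M.4$''$) sign condition. No further comment is needed.
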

\vspace{-0.2cm}

\subsection{Almost sure stability of time-varying systems with renewal process switching}

\begin{theorem}\label{renewal_GUAS}
	If Assumption \ref{iMLFs} is satisfied with $\mu_{ij}=\mu>1$ and following condition holds:
	\item[(R.4)] for all $i\in\mathbb{S}$, $\lambda_{i}(t)$ are MUSFs $\int_{t}^{t+S}\lambda_{i}(h)\mathrm{d}h\leq\varphi_i(S)$,
		and $\frac{1}{\theta}(\sum_{i\in\mathbb{S}}\mathbb{E}[\varphi_i(S)] p_i+\ln\mu)<0$.
	
	\noindent Then system \eqref{eq:SDEwMS} with renewal process switching is GUAS a.s.
\end{theorem}

\begin{theorem}\label{renewal_GES}
	If (B.1), (A.2), (A.3) are satisfied with $\mu_{ij}=\mu>1$ and conditions (R.4) holds.
	Then system \eqref{eq:SDEwMS} with renewal process switching is GES a.s.
\end{theorem}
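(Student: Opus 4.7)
The plan is to run the argument of Theorem~\ref{SM_GES} with the semi-Markov ergodic inputs replaced by the renewal-process facts in Lemma~\ref{Lemma_signals}(3), exploiting the hypothesis $\mu_i=\mu$ to collapse the mode-wise switching counts $N_i(t,t_0)$ into the single aggregate count $N(t,t_0)$ that the renewal framework actually controls. This collapse is essential because a general renewal process distinguishes only the total count, not its per-mode decomposition.

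First I would iterate (A.2)--(A.3) exactly as in the derivation of \eqref{v_est}; since all $\mu_i$ equal $\mu$, the product $\prod_{i\in\mathbb{S}}\mu_i^{N_i(t,t_0)}$ collapses to $\mu^{N(t,t_0)}$, yielding
\begin{equation*}
V_r(t,x(t))\leq V_{r_0}(t_0,\phi)\,\mu^{N(t,t_0)}\exp\!\Bigl(\int_{t_0}^{t}\lambda_{r(h)}(h)\,\mathrm{d}h\Bigr).
\end{equation*}
Invoking (B.1), taking logarithms and dividing by $t$ then reduces the target $\limsup_{t\to\infty}\frac{1}{t}\ln|x(t)|<0$ a.s.\ to establishing
\begin{equation*}
\limsup_{t\to\infty}\Bigl(\frac{N(t,t_0)}{t}\ln\mu+\frac{1}{t}\!\int_{t_0}^{t}\!\lambda_{r(h)}(h)\,\mathrm{d}h\Bigr)<0\ \ \text{a.s.},
\end{equation*}
since the initial-condition term is $O(1/t)$. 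Lemma~\ref{Lemma_signals}(3) disposes of the first summand through $N(t,t_0)/t\to 1/\theta$ a.s.

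For the integral term I would mimic the factorisation in \eqref{lambda_limt_1}:
\begin{equation*}
\frac{1}{t}\!\int_{t_0}^{t}\!\lambda_{r(h)}(h)\,\mathrm{d}h=\frac{N(t,t_0)}{t}\cdot\frac{1}{N(t,t_0)}\sum_{k=1}^{N(t,t_0)}Y_k,\ \ Y_k:=\!\!\int_{t_{k-1}}^{t_k}\!\lambda_{r(t_{k-1})}(h)\,\mathrm{d}h.
\end{equation*}
Under renewal switching the dwell times $\{S(k)\}$ are i.i.d., and Lemma~\ref{Lemma_signals}(3) gives $\mathbb{P}[r(t_{k-1})=i]=p_i$ independently of the history, so $\{Y_k\}$ are i.i.d.\ with common mean $\sum_{i\in\mathbb{S}}p_i\,\mathbb{E}\bigl[\int_{t_{k-1}}^{t_k}\lambda_i(h)\,\mathrm{d}h\bigr]$. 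The three MUSF-type hypotheses bound this mean by $\sum_i p_i\mathbb{E}[\varphi_i(S(1))]$ under (R.4), by $\sum_i p_i\bar{M}_i$ under (R.4'), and by $\theta\sum_i p_i\bar{\lambda}_i$ under (R.4''); Kolmogorov's strong law of large numbers (justified since $N(t,t_0)\to\infty$ a.s.) delivers the corresponding a.s.\ Cesaro limit. Multiplying by $N(t,t_0)/t\to 1/\theta$ and folding in the $\ln\mu/\theta$ contribution from the switching count yields $\limsup\frac{1}{t}\ln|x(t)|\le\frac{1}{p\theta}(\,\cdot\,+\ln\mu)$, where the bracket is exactly the quantity each hypothesis forces to be negative.

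The main obstacle, though mild, is verifying the i.i.d.\ structure and integrability of $\{Y_k\}$ with care: Definition~1(3) supplies the i.i.d.\ $S(k)$, Lemma~\ref{Lemma_signals}(3) supplies the memoryless mode distribution $p_i$, and piecewise continuity of each $\lambda_i$ ensures $Y_k$ is a measurable function of the pair $(r(t_{k-1}),S(k))$ alone; together these make $\{Y_k\}$ i.i.d.\ and, under any of (R.4)--(R.4''), integrable, so SLLN applies. Observe that the same chain of estimates, terminated before the logarithmic normalisation and converted instead into a uniform pathwise bound of the form \eqref{x_est}, supplies the companion Theorem~\ref{renewal_GUAS}, so the analytic work here effectively underwrites both results.
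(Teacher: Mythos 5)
Your proposal is correct and follows essentially the same route as the paper: iterate (A.2)--(A.3) to get $\ln V_r(t,x)\leq\ln V_{r_0}(t_0,\phi)+N(t,t_0)\ln\mu+\int_{t_0}^{t}\lambda_{r(h)}(h)\,\mathrm{d}h$, apply (B.1), and use the renewal-process limit $N(t,t_0)/(t-t_0)\to 1/\theta$ together with the strong law of large numbers applied to the i.i.d.\ increments $\int_{t_{k-1}}^{t_k}\lambda_{r(t_{k-1})}(h)\,\mathrm{d}h$, whose mean is bounded by $\sum_{i\in\mathbb{S}}p_i\mathbb{E}[\varphi_i(S(1))]$ (respectively $\sum_i p_i\bar M_i$, $\theta\sum_i p_i\bar\lambda_i$). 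Your explicit verification of the i.i.d.\ structure and integrability of the $Y_k$ is a point the paper leaves implicit, but it is the same argument.
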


\begin{proof}
	The proof procedure of Theorems \ref{renewal_GUAS} and \ref{renewal_GES} is very similar to Theorems \ref{SM_GUAS} and \ref{SM_GES}, we only need to prove:\vspace{-0.2cm}
	\begin{equation}\label{ineq:Renewal_lambda}
			\lim_{t\rightarrow\infty}\frac{1}{t}\int_{t_{0}}^{t}\lambda_{r(h)}(h)\mathrm{d}h=\frac{1}{\theta}\sum_{i\in\mathbb{S}}\mathbb{E}\left[\varphi_i(S(0))\right]p_i,\ \mathrm{a.s.}
	\end{equation}

	It follows from the MUSFs $\lambda_{i}(t)$ that
		\begin{small}
			\begin{equation*}
				\int_{t_{k}}^{t_{k\!+\!1}}\!\lambda_{r(t_{k})}(h)\mathrm{d}h\!=\!\int_{t_{k}}^{t_{k\!+\!1}}\!\lambda_{i}(h)I_{r(t_{k})=i}\mathrm{d}h\\
				\!\leq\!\varphi_i(S(k))I_{r(t_{k})=i}.
			\end{equation*}
		\end{small}
	Since the sequence $\{S(k)\}_{k\in\mathbb{N}^+}$ is an i.i.d. process, $\left\{ \varphi_i(S(k)) \right\}_{k\in\mathbb{N}^+}$ is also an i.i.d. process. $\mathbb{E}[\varphi_i(S(k))]=\mathbb{E}[\varphi_i(S(k+1))]$ holds for all $k$ naturally.
	Define $Z_k:=\{\varphi_i(S(k))I_{r(t_{k})=i}\}$, we can notice that $Z_k$ is an i.i.d. process and
		\begin{small}
		\begin{align*}
			\mathbb{E}[Z_k]=&\mathbb{E}\left[\varphi_i(S(k))I_{r(t_k)=i}\right]=\sum_{i\in\mathbb{S}}\mathbb{E}\left[\varphi_i(S(k))\right]\mathbb{P}[r(t_k)=i]\\
			=&\sum_{i\in\mathbb{S}}\mathbb{E}\left[\varphi_i(S(k))\right]p_i=\sum_{i\in\mathbb{S}}\mathbb{E}\left[\varphi_i(S(k+1))\right]p_i\\
			=&\sum_{i\in\mathbb{S}}\mathbb{E}\left[\varphi_i(S(k+1))\right]\mathbb{P}[r(t_{k+1})=i]
			=\mathbb{E}[Z_{k+1}].
		\end{align*}
		\end{small}
	Thus, $\{Z_k\}, k=0,1,\cdots,N(t,t_0)$ is a wide-sense stationary process and also satisfies ergodicity. Then, for $l=0,1$, there holds \vspace{-0.2cm}
		\begin{small}
		\begin{align}\label{renew_ergodic}
			&\lim_{N(t,t_0)\rightarrow\infty}\frac{1}{N(t,t_0)+l}\sum_{k=0}^{N(t,t_0)+l}\int_{t_{k}}^{t_{k+1}}\lambda_{r(t_{k})}(h)\mathrm{d}h\notag\\
			\leq&\lim_{N(t,t_0)\rightarrow\infty}\frac{1}{N(t,t_0)+l}\sum_{k=0}^{N(t,t_0)+l}\varphi_{r(t_{k})}(S(k))\notag\\
			=&\mathbb{E}\left[\varphi_i(S(0))I_{r(t_0)=i}\right],\ \mathrm{a.s.}
		\end{align}
		\end{small}
	We can promptly prove \eqref{ineq:Renewal_lambda} by \eqref{renew_ergodic}. This completes the proof.

	\end{proof}
%
%
%

\vspace{-0.6cm}
\section{Numerical examples}

{\textbf{Example 1.}} Consider a time-varying system with a semi-Markov switching as follows:
\begin{equation}\label{example_1}
	\dot{x}(t)=f_{r}(t,x(t))
\end{equation}
where $f_1(t,x)=[-2x_1+x_2,x_1-2x_2]^T$, $f_2(t,x)=[-\frac{3}{2}t^2x_1,\cos t x_1-\frac{3}{2}t^2x_2]^T$, $f_3(t,x)=[\frac{1}{2}(t\cos t-\frac{1}{2}) x_1-x_1,x_1-\frac{1}{2}x_2+\frac{1}{2}t\left(\cos t-\frac{1}{2}\right)x_2]^T$.
The switching signal $r(t)$ is a semi-Markov chain with a embeded Markov chain determined by transition probability matrix 
\begin{small}
\begin{equation*}
	P=\left[
	\begin{array}{ccc}
		0&  0.8&  0.2\\
		0.7&  0&  0.3\\
		0.6&  0.4& 0
	\end{array}
	\right].
\end{equation*}
\end{small}
The unique stationary distribution of $r_k$ can be got $\bar{\pi}=\left[0.4,0.4,0.2\right]$. The sojourn time expectations of all the modes are $m_1=1$, $m_2=3$ and $m_3=2$. We thus get stationary distribution of $r(t)$ is $\pi=\left[0.2,0.6,0.2\right]$.

Choosing Lyapunov functions $V_1(t,x)\!=\!\frac{1}{2}\left(x_1^2\!+\!x_2^2\right),\ V_2(t,x)\!=\!x_1^2\!+\!x_2^2,\ V_3(t,x)\!=\!\frac{1}{2}x_1^2\!+\!x_2^2,$,
we thus get $\mu_{21}=\mu_{23}=2$, $\mu_{12}=\mu_{32}=0.5$ and $\mu_{13}=\mu_{31}=1$.
By calculating, we get that $\dot{V}_1(t,x)\leq-2V_1(t,x),\ \dot{V}_2(t,x)\leq(-3t^2+\cos t) V_3(t,x),\ \dot{V}_3(t,x)\leq t\left(\cos t-\frac{1}{2}\right)V_3(t,x),$
which implies that $\lambda_1(t)=-2,\ \lambda_2(t)=-3t^2+\cos t,\ \lambda_3(t)=t\left(\cos t-\frac{1}{2}\right).$
By applying Theorem \ref{SM_GUAS}, we know that system \eqref{example_1} is GUAS a.s. if all the $\lambda_{i}(t)$ are MUSFs and  $\sum_{i\in\mathbb{S}}\frac{\pi_i}{m_i}(\mathbb{E}(\varphi_i(S_i))+\sum_{j\in\mathbb{S}}p_{ij}\ln\mu_{ij})<0$.
Here, we verify \vspace{-0.2cm}
\begin{small}
	\begin{align*}
		&\mathbb{E}\left[\int_{t}^{t+S_1}\lambda_{1}(h)\mathrm{d}h\right]=\mathbb{E}[-2S_1]=-2m_1=-2,\\
		&\mathbb{E}\left[\int_{t}^{t+S_2}\lambda_{2}(h)\mathrm{d}h\right]\leq\mathbb{E}[-S_2^3+2]\leq-m_2^3+2=-7,\\
		&\mathbb{E}\left[\int_{t}^{t+S_3}\lambda_{3}(h)\mathrm{d}h\right]\leq\mathbb{E}[-\frac{1}{4}S_3^2+2S_3+2]\\
		&\quad\quad\quad\quad\quad\quad\quad\quad\quad\leq-\frac{1}{4}m_3^2+2m_3+2=5,
	\end{align*}
\end{small}
and $\sum_{i\in\mathbb{S}}\frac{\pi_i}{m_i}(\mathbb{E}(\varphi_i(S_i))+\sum_{j\in\mathbb{S}}p_{ij}\ln\mu_{ij})<-1.39$.
We can conclude that system \eqref{example_1} with semi-Markov switching signal is GUAS a.s.

Figure \ref{fig:examplefig1} shows the state trajectories of system \eqref{example_1} with semi-Markov switching under single experiment. The dash-dotted line represents the trajectory of $x_1$ and the dashed line corresponds to the trajectory of $x_2$. It is evident from the figure that the states converge to $0$ under a single experiment of semi-Markov switching. Figure \ref{fig:examplefigx1} and \ref{fig:examplefigx2} present trajectories under 2000 times semi-Markov switching experiments. In all trajectories, the states uniformly asymptotically converge to the origin, confirming the almost sure GUAS property.


\begin{figure}
	\centering
	\includegraphics[width=0.8\linewidth]{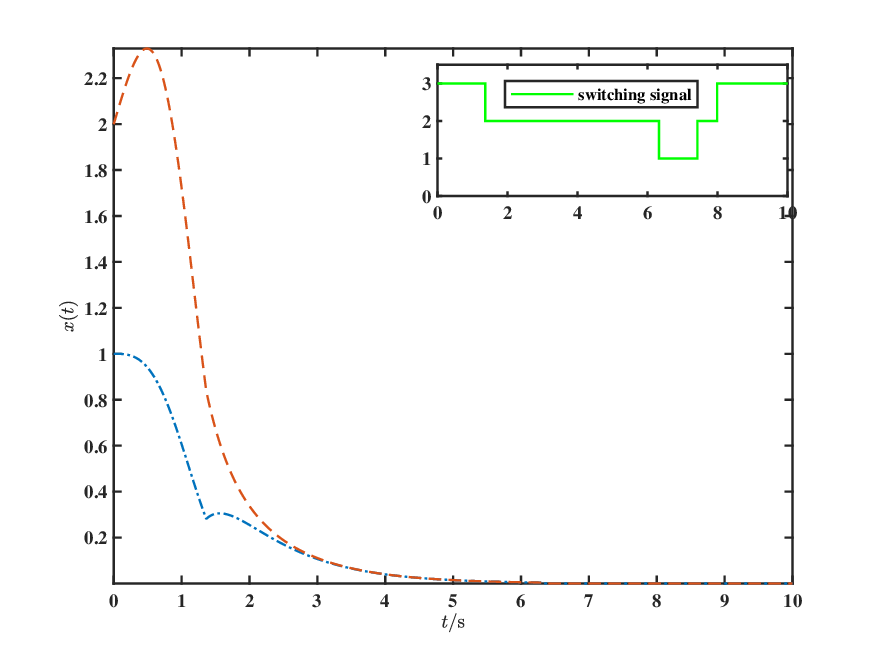}
	\caption{States' trajectories of system \eqref{example_1} under a single experiment.}
	\label{fig:examplefig1}
\end{figure}

\begin{figure}
	\centering
	\includegraphics[width=0.8\linewidth]{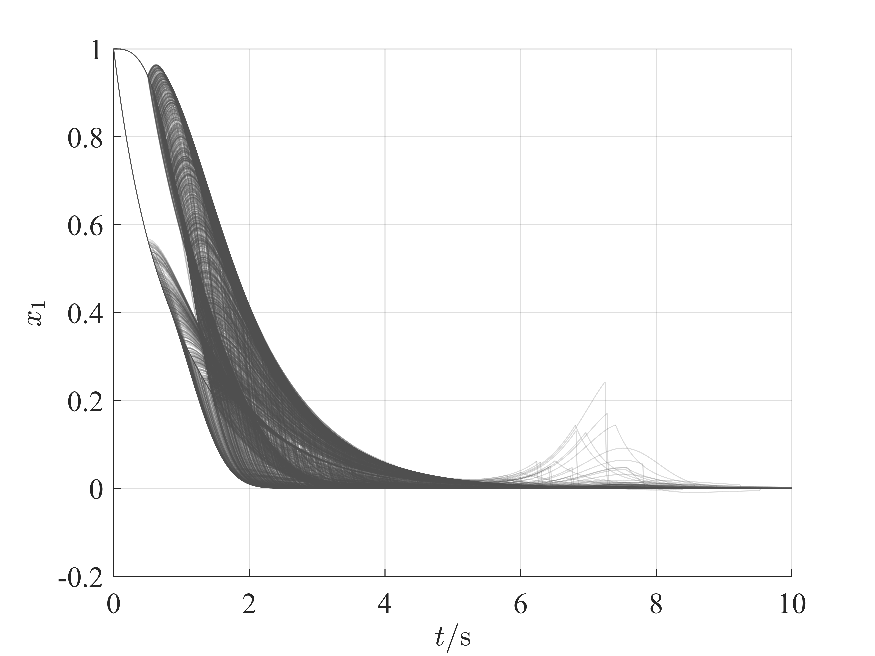}
	\caption{States' trajectories of system \eqref{example_1} under 2000 experiments.}
	\label{fig:examplefigx1}
\end{figure}

\begin{figure}
	\centering
	\includegraphics[width=0.8\linewidth]{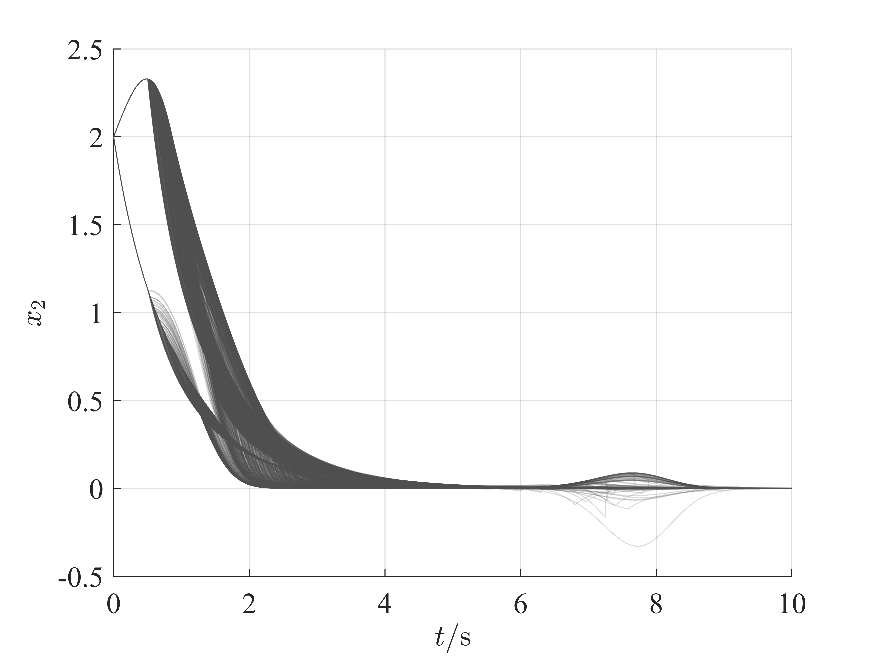}
	\caption{States' trajectories of system \eqref{example_1} under 2000 experiments.}
	\label{fig:examplefigx2}
\end{figure}

{\bf Example 2.} Let's consider the system \eqref{example_1} with following subsystems and a Markov switching signal:
$f_1=[\frac{1}{2}x_1,\frac{1}{2}x_2]^T$, $f_2=[-\frac{1}{2}(3t+1)x_1+x_2,\cos(x_1)x_2-(\frac{3t}{2}+1)x_2]^T$ and $f_3=[\frac{1}{2}(-2+\cos 4t) x_1,x_1+\frac{1}{2}(-3+\cos 4t) x_2]^T$. 
Switching signal $r(t)$ is a Markov chain with transition rate matrix 
\begin{small}
\begin{equation*}
	Q=\left[
	\begin{array}{ccc}
		-2&  1&  1\\
		1&  -2&  1\\
		2&  2&  -4
	\end{array}
	\right].
\end{equation*}
\end{small}
We get from $Q$ that stationary distribution is $\pi\!=\![0.4, 0.4, 0.2]$. 

We choose Lyapunov functions for subsystems: $V_1(t,x)=\frac{1}{2}\left(x_1^2+x_2^2\right),\ V_2(t,x)=x_1^2+x_2^2,\ V_3(t,x)=\frac{1}{2}x_1^2+x_2^2$, and thus get $\mu_{21}=\mu_{23}=2$, $\mu_{12}=\mu_{32}=0.5$ and $\mu_{13}=\mu_{31}=1$.
It can check that $\lambda_{1}(t)=1,\ \lambda_{2}(t)=-6t,\ \lambda_{3}(t)=-2+\cos 4t$ and the 1st subsystem is unstable, and the 2nd and 3rd subsystems are stable. Then, by applying the Corollary \ref{M_GUAS}, we can verify that $\sum_{i\in\mathbb{S}}\left(\mathbb{E}\left[\varphi_i(S_i)\right]q_i+\sum_{j\in\mathbb{S}}q_{ij}\ln\mu_{ij}\right){\pi_i}<-1.17.$
Hence, system \eqref{example_1} with Markov switching signal is GUAS a.s. 
This example shows that our results still work for randomly switched systems with unstable subsystems.

\section{Conclusion}
In this paper, we apply the iMLF method to analyze the stability of randomly switched time-varying systems. To achieve this, we present a novel MUSF condition for iMLFs. In establishing sufficient conditions for almost sure stability, we observe that, the mean boundedness of each subsystem over its sojourn time interval supports the almost stability of the entire system. This observation allow us to relax the stability criterion's requirements for individual subsystems, even allowing the existence of unstable subsystems. Additionally, we provide numerical examples to demonstrate the effectiveness and advantages of our approach. 

MUSF is an integral inequality-based condition, which inherently introduces conservatism in the stability criteria. As noted in \cite{Liu2024nonconservative} and \cite{Wen2024stability}, less conservative criteria can even analyze the stability of random switching systems with all unstable subsystems. However, our results still require long enough sojourn time of stable subsystems to counterbalance the effects of unstable subsystems, and thus is conservative. In future work, we aim to reduce this conservatism and extend the stability analysis to random switching time-varying systems with all unstable subsystems.


%

%


\section*{Acknowledgment}
The author is grateful to the anonymous reviewers and the associate editor for their valuable and detailed comments that substantially helped for improving the quality of the paper.

\ifCLASSOPTIONcaptionsoff
  \newpage
\fi



\vspace{-0.3cm}
\bibliographystyle{IEEEtran}
\bibliography{mybib} 
\end{document}